\newtheorem{thm}{Theorem}[section]
\newtheorem{lem}[thm]{Lemma}
\newtheorem{rem}[thm]{Remark}
\date{}
\begin{document}

\title{A new bound for the crossing number of wrapped butterflies}

\author{Vijaya N$^{1, 2}$, Bharati R$^{3}$, Paul Manuel$^{4}$\\
\small{$^{1}$Research and Development Centre, Bharathiar University, Coimbatore 621 026, India} \\
\small{$^{2}$Department of Mathematics, Alpha College of Engineering, Chennai 600 124, India} \\
\small{$^{3}$Department of Mathematics, Loyola College, Chennai 600 034, India}\\
\small{$^{4}$Department of Information Science, Kuwait University, Safat, Kuwait, 13060}}
\maketitle

\begin{abstract}
We fix an error in the bound obtained in \cite{ViBrPmIv15} for the crossing number of wrapped butterflies. The new bound finer than the one provided earlier.\\\\
{\bf AMS Subject Classification:} 05C05\\\\
{\bf Keywords:} Crossing number, good drawing of a graph, butterfly network, wrapped butterfly network.
\end{abstract}
\section{Introduction}
Let $G$ be a simple connected graph with vertex set $V(G)$ and edge set $E(G)$. A drawing of $G$ is said to be good provided that no edge crosses itself, no adjacent edges cross each other, no two edges cross more than once, and no three edges cross in a point. The crossing number $Cr(G)$ of a graph $G$ is the minimum possible number of edge crossings in a good drawing of $G$ in the plane. 

Garey and Johnson \cite{GaJo83} proved that computing the crossing number is NP-complete. Not surprisingly, there are only a few infinite families of graphs for which the exact crossing numbers are known (see for example \cite{LiYaZh09, PaRi07, RiTh95}). Therefore, it is more practical to determine the upper and lower bounds for the crossing number of a graph.

Another family of graphs whose crossing numbers have received a good deal of attention is the interconnection networks proposed for parallel computer architecture. For hypercubes and cube connected cycles, the crossing number problem is investigated by Sýkora \emph{et al}. \cite{SyVr93}. Cimikowski  \cite{Ci96} has given upper bounds for the crossing number for various networks like torus, buttery and Benes networks. He has also obtained a bound for the crossing number of mesh of trees \cite{Ci03}. Manuel \emph{et al}. \cite{PaBhInVa13} have given improved bounds for the crossing number of butterfly network and have also given a lower bound which matches the upper bound obtained.

In an earlier paper Vijaya et.al \cite{ViBrPmIv15} modified the drawing of the $r$-dimensional wrapped butterfly network $WB(r)$ with $\frac{5}{4}4^{r}-3(2^{r})-r2^{r}$ crossings which slightly improved the existing estimate $\frac{3}{2}4^{r}-3(2^{r})-r2^{r}$ given by Cimikowski \cite{Ci96}. They proposed a new drawing and claimed to have obtained a finer bound. In this paper we fix an error identified there and obtain a better bound with $ \frac{7}{8}4^{r}-(3r-4)2^{r}$ crossings. A comparison chart is also provided in the last section.

\section{Wrapped butterfly network}

The set of vertices of an $r$-dimensional butterfly corresponds to pairs $[w, i]$, where $i$ is the dimension or level of a vertex $(0 \leq i \leq r)$ and $w$ is an $r$-bit binary number that denotes the row of the vertex. Two vertices $[w, i]$ and $[w', i']$ are linked by an edge if and only if $i' = i + 1$ and either:

\begin{enumerate}
  \item $w$ and $w'$ are identical, or
  \item $w$ and $w'$ differ in precisely the $i^{th}$ bit.
\end{enumerate}

The $r$-dimensional butterfly network denoted by $BF(r)$ has $(r + 1)2^{r}$ vertices and $r2^{r + 1}$ edges. It has $r + 1$ levels and there are $2^{r}$ vertices in each level. Each vertex on level 0 and level $r$ is of degree 2; all other vertices are of degree 4 \cite{Le92}. 

When the vertices of $BF(r)$ in level 0 are merged with those in level $r$, a new structure called the wrapped butterfly is obtained. The $r$-dimensional wrapped butterfly denoted by $WB(r)$ has $r$ levels, from 0 to $r - 1$, and each level has $2^{r}$ vertices \cite{Le92}. The wrapped butterfly network has been studied with regard to Hamiltonian paths and cycles \cite{BaRa94, Wo95} and VLSI layout \cite{Ke94}.

Another level wise labeling scheme for the vertices of $WB(r)$ is adopted here. The vertices are numbered from left to right, with integers $1, 2 \cdots 2^{r}$. A vertex in level $i$ and in position $j$ from the left is designated by the pair $(i, j), \; 1\leq j \leq 2^{r}, \; 0 \leq i \leq r-1$. The labeling in $WB(3)$ is illustrated in Figure \ref{Fig1}.

\begin{figure}[h]
 \centering
  \includegraphics[width=9 cm]{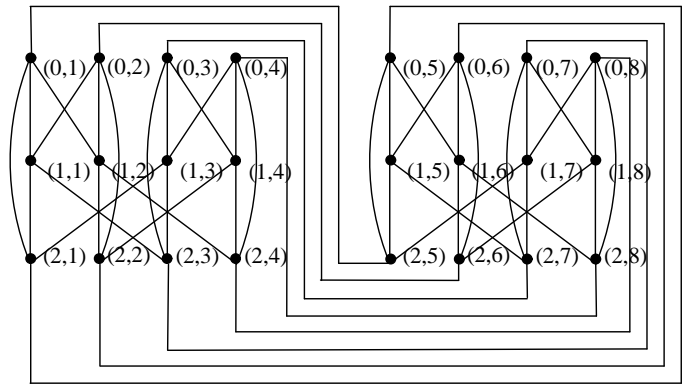}
  \vspace{0 cm}
  \caption{Another labeling of $WB(3)$}
  \label{Fig1}
\end{figure}

\section{A finer bound for the crossing number}

This section begins with the statements of existing bounds.
\begin{thm}\label{T1}
\cite{Ci96} Let $G$ be $WB(r)$. Then $Cr(G)\leq \frac{3}{2}4^{r}-3(2^{r})-r2^{r}$
\end{thm}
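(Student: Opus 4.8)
The plan is to prove this upper bound constructively: I would exhibit one explicit \emph{good} drawing of $WB(r)$ and count its crossings, showing the total does not exceed $\frac{3}{2}4^{r}-3(2^{r})-r2^{r}$. Since $WB(r)$ has $r$ cyclically arranged levels $0,1,\dots,r-1$ with $2^{r}$ vertices each, and edges run only between consecutive levels (level $r-1$ wrapping back to level $0$), the natural layout draws the levels as $r$ horizontal lines and places the $2^{r}$ vertices of each level in their labeled positional order $1,2,\dots,2^{r}$. The straight edges (same position in adjacent levels) are then vertical and pairwise non-crossing, so every crossing comes from a cross edge, and the count decomposes transition by transition.

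First I would isolate two consecutive levels $i$ and $i+1$ and analyse the bipartite block between them. The cross edge leaving position $a$ joins it to the position obtained by flipping the $i$th bit, i.e.\ to $a\pm 2^{i}$, so every cross edge spans numerical distance exactly $2^{i}$. Two kinds of crossings occur. A cross edge meets precisely the vertical straight edges lying strictly between its endpoints, namely $2^{i}-1$ of them, giving $2^{r}(2^{i}-1)$ cross--straight crossings in the block. The cross--cross crossings I would identify with the inversions of the bit-flip permutation $\pi(a)=a\oplus 2^{i}$; grouping positions by the bits above bit $i$ shows that each of the $2^{\,r-1-i}$ resulting groups contributes $4^{i}$ inversions, for a total of $2^{\,r-1+i}$ cross--cross crossings in the block.

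Next I would sum over all $r$ transitions $i=0,\dots,r-1$, which use the bits $0,1,\dots,r-1$ once each. The cross--straight contribution telescopes to $\sum_{i=0}^{r-1}2^{r}(2^{i}-1)=4^{r}-(r+1)2^{r}$, while the cross--cross contribution is the geometric sum $\sum_{i=0}^{r-1}2^{\,r-1+i}=\tfrac12(4^{r}-2^{r})$. Already these two families reproduce the dominant term $\tfrac32 4^{r}$ exactly, so the remaining task is purely a matter of shaving the lower-order $2^{r}$ coefficient down to the claimed value.

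The main obstacle is the wrap-around transition from level $r-1$ back to level $0$: unlike the interior transitions it cannot be realised as a clean two-row strip, since those two rows sit at opposite ends of the layout, so its edges must be routed as long arcs and the naive strip count is too wasteful to reach the stated coefficient. The real work is therefore to route this block economically — exploiting the freedom in drawing the wrap-around arcs and in cyclically rotating the vertex order of each level — so that the interior totals above are corrected to $-3(2^{r})-r2^{r}$. Throughout I would also have to verify that the drawing stays \emph{good} in the sense of the introduction (no edge crossing itself, no two adjacent edges crossing, no two edges crossing twice, no three edges through a common point), which is exactly the constraint that pins down how the cross edges and the wrap-around arcs may be bent.
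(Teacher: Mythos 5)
There is a genuine gap here, and it sits exactly where you parked it. First, a point of comparison: the paper contains no proof of this statement at all --- Theorem \ref{T1} is quoted as background from Cimikowski \cite{Ci96} --- so your attempt has to stand on its own. Your interior-strip arithmetic is correct as far as it goes: for a two-row strip in positional order the cross--straight count $2^{r}(2^{i}-1)$ and the inversion count $2^{r-1-i}\cdot 4^{i}=2^{r-1+i}$ are both right. But sum them over all $r$ transitions, as your plan does: the cross--straight part gives $\sum_{i=0}^{r-1}2^{r}(2^{i}-1)=4^{r}-(r+1)2^{r}$ and the cross--cross part gives $\sum_{i=0}^{r-1}2^{r-1+i}=\frac{1}{2}\left(4^{r}-2^{r}\right)$, for a total of $\frac{3}{2}4^{r}-\frac{3}{2}2^{r}-r2^{r}$. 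This is \emph{strictly larger} than the claimed bound $\frac{3}{2}4^{r}-3\left(2^{r}\right)-r2^{r}$, by $\frac{3}{2}2^{r}$. So even under the most generous reading --- that the wrap-around transition can be drawn as cheaply as an interior strip --- your construction proves a weaker inequality than the theorem asserts.

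Second, that generous reading is false, and you concede as much: in your layered layout the $2^{r+1}$ wrap-around edges (straight and cross) join the bottom line to the top line and must be routed around or across everything in between, so the strip count $2^{r}(2^{r-1}-1)+2^{2r-2}$ you implicitly charged to $i=r-1$ is not attained by any routing you have described, and you prove no upper bound whatsoever for this family's crossings. Consequently the total number of crossings in your drawing is not bounded by anything you have established. The step you defer as ``the remaining task'' --- routing the wrap-around block economically and simultaneously recovering an extra $\frac{3}{2}2^{r}$ savings (for instance from the $2^{r-1}$ avoidable inversions in the $i=0$ block, whose union of $4$-cycles is planar, plus savings in the wrap-around block itself) --- is not a lower-order correction; it is the entire content of Cimikowski's theorem. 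Until an explicit drawing achieving those savings is exhibited, verified to be good, and its crossings counted, the statement remains unproved.
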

\begin{thm}\label{T2}
\cite{ViBrPmIv15} Let $G$ be $WB(r)$. Then $Cr(G)\leq \frac{5}{4}4^{r}-3(2^{r})-r(2^{r}) $
\end{thm}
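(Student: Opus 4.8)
The plan is to prove this upper bound constructively, by displaying one explicit good drawing of $WB(r)$ and counting its crossings; any such count is automatically an upper bound for $Cr(WB(r))$. I would use the level-wise labeling of Section~2 and place the $r2^{r}$ vertices on an $r\times 2^{r}$ rectangular array, with row $i$ ($0\le i\le r-1$) carrying the $2^{r}$ vertices of level $i$ in their natural left-to-right order. Every identity edge, which joins $(i,j)$ to $(i+1,j)$, is then drawn as a short vertical segment, so that the $r2^{r}$ identity edges contribute no crossings at all. What remains are the $r2^{r}$ cross edges, which between levels $i$ and $i+1$ join each position $j$ to the position obtained by flipping a single prescribed bit, together with the wrap-around cross edges that close the network by joining level $r-1$ back to level $0$.

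I would organise the count layer by layer. The span of a cross edge in the layer that flips bit $k$ is $2^{k}$ positions, so within that layer the $2^{r-1}$ butterfly pairs split into nested groups, and each wide diagonal is forced to cross the vertical segments it passes over as well as the diagonals nested beneath it. First I would compute, for a single layer of span $2^{k}$, the total number of such crossings as an explicit function of $k$ and $r$, and then sum this quantity over $k=0,1,\dots,r-1$. Because the span doubles from one layer to the next, the sum is essentially geometric and its leading term is a fixed constant times $4^{r}$; the precise value of that constant is exactly what is controlled by the routing of the wide-span diagonals, and it is here that the coefficient $\tfrac{5}{4}$ --- improving on Cimikowski's $\tfrac{3}{2}$ of Theorem~\ref{T1} --- is meant to appear. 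The subleading terms of the sum, together with a correction for the degree-$4$ vertices on the boundary rows, are designed to produce the $-3(2^{r})$ and $-r2^{r}$ terms.

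Next I would handle the wrap-around cross edges separately, since these are precisely the edges distinguishing $WB(r)$ from $BF(r)$. I would route them through a reserved channel outside the array so that each wrap-around edge meets a controlled number of the interior edges, and then add this contribution to the layer-by-layer total. Throughout, I would verify that the drawing is good in the sense of Section~1: no edge crosses itself, adjacent edges stay disjoint, no two edges meet more than once, and no three edges share a crossing point; the vertical-plus-nested-diagonal layout is chosen with exactly these constraints in mind.

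The main obstacle will be the exact bookkeeping rather than the qualitative picture. The dominant $4^{r}$ term is a difference of geometric sums, and an over- or under-count of a single crossing per pair in the widest layers shifts the leading coefficient by a term of order $4^{r}$; obtaining $\tfrac{5}{4}$ exactly, and not merely $\Theta(4^{r})$, is the delicate part. The wrap-around edges are the likeliest source of miscounting, since they interact with diagonals of every span --- indeed a companion claim of a sharper bound in the same line of work is later found to be flawed at just this point. I would therefore isolate the wrap-around contribution as a separate lemma, count it independently of the interior layers, and only then combine the two totals to verify that they collapse to exactly $\tfrac{5}{4}4^{r}-3(2^{r})-r2^{r}$.
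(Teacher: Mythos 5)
First, a point of reference: this paper does not prove Theorem~\ref{T2} at all. It is quoted from \cite{ViBrPmIv15} as one of the ``existing bounds'' in Section~3, so there is no internal proof to compare your proposal against; the comparison has to be made on the merits of your argument alone. Judged that way, your proposal has a genuine gap. The drawing you describe --- levels laid out as rows of an $r\times 2^{r}$ array, identity edges drawn as crossing-free verticals, cross edges as nested diagonals, wraparound edges routed in an outside channel --- is essentially Cimikowski's construction, and careful bookkeeping for that layout yields the bound of Theorem~\ref{T1}, namely $\frac{3}{2}4^{r}-3(2^{r})-r2^{r}$. The entire content of Theorem~\ref{T2} is the improvement of the leading coefficient from $\frac{3}{2}$ to $\frac{5}{4}$, which according to the introduction comes from a specific modification of that drawing. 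Your proposal never specifies the modification: you write that ``the precise value of that constant is exactly what is controlled by the routing of the wide-span diagonals,'' but you do not say what routing achieves $\frac{5}{4}$, nor do you carry out the per-layer count, the geometric sum, or the wraparound contribution. Every quantity that distinguishes the theorem from Theorem~\ref{T1} is deferred to future bookkeeping, so as written the argument establishes at best Cimikowski's bound, not the claimed one.

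For contrast, the route this line of work actually takes (in \cite{ViBrPmIv15} and in the present paper's Theorem~\ref{T4}) is not a refinement of the rectangular-array layout at all: the wrapped butterfly is decomposed into the subgraphs $B_{k}$ induced by quadruples of row-cycles, each $B_{k}$ is shown to be planar (Lemmas~\ref{L2} and \ref{L3}), these planar ``rings'' are nested one inside another to form the drawing $RB(r)$, and the only crossings then come from inner and wraparound edges meeting the rings and one another (Lemmas~\ref{L4}--\ref{L7}). If you want a self-contained proof of a bound with leading coefficient below $\frac{3}{2}$, some structural idea of this kind --- isolating large planar pieces and confining all crossings to a small set of edges --- is what your outline is missing; without it, no amount of care in the layer-by-layer count of the array drawing will push the constant down to $\frac{5}{4}$.
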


\begin{thm}\label{T3}
\cite{ViBrPmIv15} Let $G$ be $RWB(r)$. Then $Cr(G)\leq \frac{1}{2}4^{r}-2^{r+1}$
\end{thm}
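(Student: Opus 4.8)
The plan is to prove the bound constructively: exhibit one explicit good drawing of $RWB(r)$ and count its crossings exactly. For upper bounds on the crossing number of a recursively structured network this is the standard route, and the target quantity $f(r):=\tfrac12 4^r-2^{r+1}$ practically dictates the shape of the argument, since it satisfies the clean recurrence $f(r)=2f(r-1)+4^{r-1}$ with base value $f(2)=0$. I would therefore organize the entire proof around this recurrence and finish with a routine induction.

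First I would fix coordinates using the level-wise labeling $(i,j)$ introduced above: draw the levels as parallel horizontal lines and place the $2^r$ vertices of each level at their left-to-right positions $1,\dots,2^r$. The inter-level edges are then the straight edges (identical rows) and the cross edges (rows differing in the relevant bit), together with the wrap-around edges that identify the extreme levels. With these coordinates pinned down, the next step is to display $RWB(r)$ as two side-by-side copies of a drawing of $RWB(r-1)$ joined by a single layer of cross edges; this mirrors the recursive construction of butterfly networks and is exactly what produces the factor $2$ and the inherited term $2f(r-1)$ in the recurrence.

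The heart of the count is then the number of crossings contributed by that one connecting layer, which must come out to precisely $4^{r-1}$. Two edges of the layer cross iff their endpoints interleave along the dividing line, so the problem reduces to an interleaving (inversion) count over the $2^{r-1}$ straight and $2^{r-1}$ cross edges that bridge the two halves. I would tabulate these endpoints in the chosen coordinates and verify combinatorially that the total number of interleavings equals $4^{r-1}$, taking care that edges sharing an endpoint are excluded (by the good-drawing hypothesis) and that each genuine crossing is counted exactly once.

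The step I expect to be the main obstacle is precisely this exact count, together with the verification that placing the two subdrawings side by side introduces \emph{no} crossings beyond those in the connecting layer --- in particular, that a wrap-around edge of one copy meets no interior edge of the other. An off-by-one slip in the interleaving count, or an overlooked crossing involving a wrap-around edge, is exactly the sort of mistake the present paper advertises itself as repairing, so the delicate work is a scrupulous case-by-case accounting of the dividing layer rather than any single clever idea. Once the recurrence $f(r)=2f(r-1)+4^{r-1}$ is justified against the planar base case $f(2)=0$, the closed form $\tfrac12 4^r-2^{r+1}$ follows immediately.
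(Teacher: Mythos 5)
There is a genuine gap here, and it is worth being precise about where you stand relative to the paper: Theorem \ref{T3} is not proved in this paper at all --- it is quoted from \cite{ViBrPmIv15} as the \emph{erroneous} bound that the present paper exists to correct. Indeed, $\frac{1}{2}4^{r}-2^{r+1}=2^{r+1}(2^{r-3}-1)+2^{2r-2}=Cr_{D}(IE,RE)+Cr_{D}(WIE,RE)$; that is, the bound of Theorem \ref{T3} accounts only for the crossings of inner and wraparound edges with the rings (Lemmas \ref{L4} and \ref{L5}), and omits $Cr_{D}(IE,WIE)$ and $Cr_{D}(WIE,WIE)$ (Lemmas \ref{L6} and \ref{L7}), which are strictly positive for $r>4$. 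Once those are restored, the count for the same drawing becomes $\frac{7}{8}4^{r}-(3r-4)2^{r}$ (Theorem \ref{T4}), which is \emph{larger}; so no correct count of the ring drawing can deliver the figure you are aiming at, and the paper itself regards Theorem \ref{T3} as unsupported.

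Your proposed route has this same undercount built into it, plus a structural defect. Structurally, decomposing the wrapped butterfly into two side-by-side copies of the $(r-1)$-dimensional network joined by one layer of cross edges is exactly what Remark \ref{R2} warns against: unlike the ordinary butterfly, the wrapped butterfly does not contain two copies of the smaller wrapped butterfly, because the wraparound edges identify the extreme levels of the \emph{whole} graph and do not restrict to wraparound edges of the two halves. Quantitatively, your recurrence $f(r)=2f(r-1)+4^{r-1}$ credits the ``connecting layer'' with $4^{r-1}=2^{2r-2}$ crossings, which matches only the wraparound-versus-ring term of Lemma \ref{L5}; the paper's induction in Theorem \ref{T4} must instead add $2^{2k-2}+2^{k+1}(2^{k-4}-1)+2^{k}(2^{k-4}-1)$, and the two extra terms are positive for $k\geq 5$, so your induction cannot close on the claimed value. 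Ironically, the risk you flagged yourself --- ``an overlooked crossing involving a wrap-around edge'' --- is precisely the error in the original derivation of this statement that the present paper was written to repair.
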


To recall the definition of the new drawing proposed in \cite{ViBrPmIv15} define cycles $C_{k, j} = (0, 4(k - 1) + j) (1, 4(k - 1) + j)\ldots (r - 1, 4(k - 1) + j) (0, 4(k - 1) + j)$, $1 \leq j \leq 4$; $1 \leq k \leq 2^{r-2}$.

Let $B_{k} , 1 \leq k\leq 2^{r-2}$,  be the subgraph of $WB(r)$ induced by the cycles $C_{k, j}, 1 \leq j \leq4$.

\begin{figure}[h]
 \centering
  \includegraphics[width=11 cm]{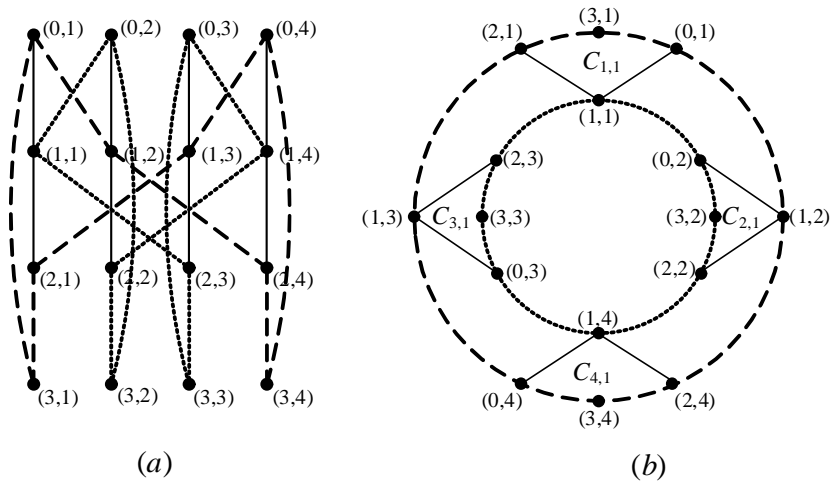}
  \vspace{0 cm}
  \caption{(a). The graph $B$ in $WB(4)$ (b). The ring $R$ in $RB(4)$}
  \label{Fig2}
\end{figure}

\begin{lem}\label{L2}
\cite{ViBrPmIv15} The graphs $B_{k}, 1 \leq k \leq 2^{r - 2}$, are isomorphic.
\end{lem}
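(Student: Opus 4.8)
The plan is to exhibit an explicit isomorphism between any two blocks $B_k$ and $B_{k'}$, and thereby reduce the claim to checking that a single, very concrete vertex bijection preserves adjacency. First I would define the map $\phi \colon V(B_k) \to V(B_{k'})$ that fixes the level and translates the column index by the constant $4(k'-k)$, namely
$$\phi\big(i,\, 4(k-1)+j\big) = \big(i,\, 4(k'-1)+j\big), \qquad 0 \le i \le r-1,\ 1 \le j \le 4.$$
Because the level is preserved and the column is shifted by a fixed amount, $\phi$ is manifestly a bijection from the $4r$ vertices of $B_k$ onto the $4r$ vertices of $B_{k'}$, with inverse given by the opposite translation. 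It then remains only to verify that $\phi$ carries edges to edges in both directions.

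Next I would classify the edges of the induced subgraph $B_k$ into two types. The first type consists of the vertical edges of the defining cycles $C_{k,j}$: these join $(i,c)$ to $(i+1,c)$ and close up via the wrap edge $(r-1,c)$ to $(0,c)$, for each of the four columns $c = 4(k-1)+j$. Since $\phi$ preserves the level and applies the same column shift to both endpoints, every such vertical edge is sent to the corresponding vertical edge of $C_{k',j}$, and conversely. The second type consists of the butterfly cross edges that have both endpoints inside the block. Here the key observation is that the four columns of a block, $4(k-1)+1,\ldots,4(k-1)+4$, agree in all binary positions except the lowest two, the index $k$ merely setting the high-order bits. Consequently a cross edge at level $i$, which flips the $i$-th bit of the row, can keep both endpoints inside the block only when $i$ is one of the two lowest bits, and for those levels the incidence pattern among the within-block positions $j \in \{1,2,3,4\}$ is determined solely by $j$ and $i$, never by $k$.

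The crux of the argument, and the step I expect to demand the most care, is precisely this translation invariance of the cross edges: I must confirm that a pair of columns in $B_k$ is joined by a cross edge at level $i$ if and only if the correspondingly shifted pair in $B_{k'}$ is joined by a cross edge at the same level. This reduces to checking that the butterfly adjacency relation, read off the left-to-right labeling of Figure~\ref{Fig1}, is periodic with period four in the column index at every level, so that applying $\phi$ neither creates nor destroys an internal edge; the wrap-around transition from level $r-1$ to level $0$ must be inspected separately, since for $r \ge 3$ its cross edge flips a high-order bit and hence leaves each block. Once this periodicity is established, $\phi$ preserves adjacency in both directions and is therefore a graph isomorphism; as $k$ and $k'$ were arbitrary, all the blocks $B_k$, $1 \le k \le 2^{r-2}$, are mutually isomorphic, as claimed.
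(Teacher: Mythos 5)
Your proposal is correct, but be aware that there is nothing in this paper to compare it against: Lemma~\ref{L2} is stated here with a citation to \cite{ViBrPmIv15}, and no proof is reproduced in the present paper, so your argument stands on its own. As a proof it works. The translation $\phi(i,4(k-1)+j)=(i,4(k'-1)+j)$ is a bijection between the vertex sets; it visibly carries the four column cycles $C_{k,j}$ (the straight edges together with the straight wraparound edge closing each column) onto the cycles $C_{k',j}$; and your analysis of the remaining edges is the right one. The four rows of a block share all bits except the two low-order ones, so a cross edge at level $i$ can have both endpoints in the block only at the two lowest levels, and there the adjacency among the within-block positions $j\in\{1,2,3,4\}$ (the pairs $\{1,2\},\{3,4\}$ at one level and $\{1,3\},\{2,4\}$ at the next) is determined by $j$ and $i$ alone, independently of $k$; the wraparound cross edges flip a high-order bit and exit every block, exactly as you say. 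This translation invariance is also precisely what the present paper implicitly relies on when it asserts that the only edges leaving the rings are the cross edges between levels $i$ and $i+1$, $2\le i\le r-2$, together with the wraparound edges. The one dependency worth stating explicitly in a final write-up is that your identification of position $j$ with the row of binary value $j-1$ (so that straight edges are vertical and a level-$i$ cross edge shifts a column by $2^{i}$) is read off the labeling of Figure~\ref{Fig1}; you flagged this, and under that convention the verification is complete.
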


\begin{lem}\label{L3}
\cite{ViBrPmIv15} The graphs $B_{k}, 1 \leq k \leq 2^{r - 2}$, are planar.
\end{lem}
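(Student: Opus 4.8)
The plan is to produce one explicit crossing-free drawing and then appeal to symmetry. By Lemma~\ref{L2} all the blocks $B_k$ are isomorphic, so it is enough to draw a single representative; I would work with the block whose four columns correspond to four binary labels differing only in two consecutive bit positions, which is precisely the structural feature behind the isomorphism of Lemma~\ref{L2}.

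The first step is to read the edge set of this block off the adjacency rule of $WB(r)$. The four cycles $C_{1,j}$ use only the straight edges (constant binary label) and are pairwise vertex-disjoint, so every extra edge of the induced subgraph is a cross edge between two of the four columns. A cross edge issued at level $i$ flips the $i$-th bit, hence it stays inside the block exactly when $i$ is one of the two positions in which the labels vary. Therefore only two of the $r$ levels emit cross edges, four at each, and these eight edges join the four columns in the pattern of two vertex-disjoint $4$-cycles at one level, stacked with a shift against two more $4$-cycles one level down. At every other level the four columns carry only their straight edges, so the three levels touched by cross edges form the whole of the interesting part of $B_1$, while each column closes up through its wrap-around arc, whose internal vertices all have degree $2$. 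Suppressing those degree-$2$ vertices shows that $B_1$ is a subdivision of a fixed graph $B'$ on the $12$ vertices of the three active levels.

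The second step is to embed $B'$. Each column now contributes a triangle (its three active vertices together with the suppressed wrap-around edge), and the cross edges link these four triangles cyclically, consecutive triangles being joined by exactly two edges that bound a quadrilateral; in each triangle the middle (degree-$4$) vertex sends one edge to each neighbouring triangle, while the two outer vertices each reach only a single neighbour. I would place the four triangles at the corners of a square in this cyclic order, route each linking pair of edges along the adjacent side, and lead the two edges of the last link around the outer face, checking at each triangle that its hub vertex can emit one edge to either side without meeting its own triangle.

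The delicate point, and the step I expect to be the main obstacle, is this final routing: the wrap-around arcs and the two closing edges must be carried around the braided region formed by the stacked $4$-cycles without creating a crossing, and one has to confirm that both closing edges fit in the outer face at once. Once $B'$ is drawn without crossings, $B_1$ is planar because subdivision preserves planarity, and Lemma~\ref{L2} promotes the conclusion to every $B_k$.
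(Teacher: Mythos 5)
Your proposal is correct and is essentially the paper's own approach: the lemma is established (via the cited reference) by exhibiting the plane ``ring'' drawing $R$ of $B_{1}$ shown in Figure~\ref{Fig2} and appealing to Lemma~\ref{L2}, and your four triangles joined cyclically by pairs of edges in the order $00, 01, 11, 10$ are exactly the cycles $C_{1,1}, C_{1,2}, C_{1,4}, C_{1,3}$ that the paper notes appear clockwise in $R$. The routing you flag as the main obstacle does go through: after suppressing the degree-$2$ vertices each wrap-around arc becomes a single triangle edge (so there is no braided region left to negotiate), and the endpoints of the two closing edges occur in nested, non-interleaved cyclic order on the outer face of the three-link partial drawing, so both closing edges fit in the outer face at once and the resulting annular embedding is precisely the ring $R$.
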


The plane graph associated with $B_{1}$ is called a ring and is denoted by $R$; see Figure \ref{Fig2}. The proposed drawing of $WB(r)$, denoted $RB(r)$, is made up of rings $R_{k}$, $1 \leq k \leq 2^{r - 2}$. The ring $R_{j}$ is drawn in the interior face of the ring $R_{i}$ whenever $1 \leq i < j\leq 2^{r - 2}$. Since each $R_{k}$ is planar, the edges of $RB(r)$ contributing to the crossing number are the ones corresponding to the cross edges of $WB(r)$ between levels $i$ and $i + 1$, $2 \leq i \leq r - 2, r \geq 4$, and the ones corresponding to the wraparound edges of $WB(r)$. We call these edges as inner edges of $RB(r)$ and they cross the rings and cross one another too. Before attempting to count the number of crossings in $RB(r)$ we first describe the method of drawing; a few more definitions and notations are necessary.

The set of all edges belonging to the rings $R_{k}$, $1 \leq k \leq 2^{r - 2}$, is denoted by $RE$. An inner edge of $RB(r)$ corresponding to a cross edge of $WB(r)$ between levels $i$ and $i + 1$ is denoted by $I_{i}$, $2 \leq i \leq r - 2$, $r \geq 4$. The set of all such inner edges of $RB(r)$ is denoted by $IE$. An edge of $RB(r)$ corresponding to a wraparound edge of $WB(r)$ is also an inner edge and is denoted by $I_{r - 1}$. The set of all such edges is denoted by $WIE$.

It is clear from the structure of the ring $R_{1}$ that the cycles $C_{1, 1}, C_{1, 2}, C_{1, 4}$ and $C_{1, 3}$ appear clockwise in this order. The same is true for any ring $R_{k}$. Thus the cycles $C_{1, 1}, C_{2, 1}, C_{3,1} \ldots C_{2^{r - 2}, 1}$ appear one below the other in $RB(r)$. This observation is useful in describing the method of including the edges of $RB(r)$. The graph induced by the vertices  in $\bigcup_{k=1}^{2^{r-2}}V(C_{k,1})$  constitutes one fourth of the new drawing. If $D$ is a drawing of $RB(r)$ then the one fourth of the diagram is denoted, for convenience, by $\frac{1}{4} D$.

In $\frac{1}{4} D$ the inner edges $I_{2}$ join the vertices of $C_{1, 1}$ and $C_{2, 1}$; $C_{3, 1}$ and $C_{4, 1}$ and so on. All these edges are drawn on left; see Figure \ref{Fig3}. Inner edges $I_{3}$ join the vertices of $C_{1, 1 }$ and $C_{3, 1}$; $C_{2, 1}$ and $C_{4, 1}$ and so on. These edges are distributed equally on the left and right. This process continues and gives the diagram $\frac{1}{4} D$. Note that in $\frac{1}{4} D$ the inner edges $I_{i}$, $2 \leq i \leq r - 1$, occur in pairs.

\begin{lem}\label{L4}
$Cr_{D}(IE, RE)=2^{r+1}(2^{r-3}-1), r\geq4$.
\end{lem}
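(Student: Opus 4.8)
The plan is to exploit the four-fold symmetry of the drawing $RB(r)$ and reduce everything to the one-quarter diagram $\frac{1}{4}D$. Because the four cycles $C_{k,1},C_{k,2},C_{k,4},C_{k,3}$ sit clockwise in each ring and the rings are stacked identically, the $IE$--$RE$ crossings split into four congruent families, one for each column class $j\in\{1,2,3,4\}$; hence $Cr_{D}(IE,RE)=4\,Cr_{\frac{1}{4}D}(IE,RE)$. So I would first argue this reduction carefully, and then work entirely inside $\frac{1}{4}D$, where the rings degenerate to the linear stack of paths $C_{1,1},C_{2,1},\dots,C_{2^{r-2},1}$ drawn one below the other, with $C_{1,1}$ outermost and $C_{2^{r-2},1}$ innermost.

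Next I would classify the inner edges and, for each class, read off two numbers: how many edges of that class appear in $\frac{1}{4}D$, and how many ring edges a single such edge crosses. The key structural fact is that a cross edge of type $I_{i}$ flips the $i$-th bit, which in the position labeling shifts the column by $2^{i}$ and therefore joins two rings whose indices differ by exactly $2^{i-2}$. Since the inner edges occur in pairs, each class $I_{i}$ contributes $2^{r-3}$ pairs, i.e.\ $2^{r-2}$ edges, to $\frac{1}{4}D$. An edge joining $R_{k}$ to $R_{k+2^{i-2}}$ must pierce the rings lying between them in the stack, and a short analysis of the drawing in Figure \ref{Fig3} shows that an ordinary cross edge $I_{i}$, $2\le i\le r-2$, meets exactly $2^{i-2}$ ring edges, whereas a wraparound edge $I_{r-1}$, which spans the distance $2^{r-3}$, can be routed so that it meets only $2^{r-3}-1$ ring edges. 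Summing the two contributions gives
\[
Cr_{\frac{1}{4}D}(IE,RE)=\sum_{i=2}^{r-2}2^{r-2}\,2^{i-2}+2^{r-2}\bigl(2^{r-3}-1\bigr).
\]
Evaluating $\sum_{i=2}^{r-2}2^{i-2}=2^{r-3}-1$ collapses both terms to $2^{r-2}(2^{r-3}-1)$, so the quarter count is $2^{r-1}(2^{r-3}-1)$, and multiplying by $4$ yields $2^{r+1}(2^{r-3}-1)$, as claimed.

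The hard part will be justifying the two per-edge crossing numbers rigorously, rather than merely reading them off the picture. In particular I must explain the off-by-one discrepancy between the ordinary cross edges (which each cross $2^{i-2}$ ring edges, one of them an edge of an endpoint ring forced by the layout) and the wraparound edges (which escape that extra crossing). I would also have to verify that the four-fold reduction is exact --- that no $IE$--$RE$ crossing is counted twice or missed across the quarter boundaries --- and, crucially, that I am counting only crossings between an inner edge and a ring edge, keeping them disjoint from the inner--inner crossings that are tallied separately. Establishing these exact per-edge counts from the explicit drawing is where the real work lies; once they are in hand, the summation is the routine geometric series above.
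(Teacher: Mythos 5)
Your final number matches the lemma, but only because two errors in your accounting cancel each other; as a proof it does not stand. The first error is a misclassification: in this paper the wraparound edges are not members of $IE$ at all --- they form the separate class $WIE$, and their crossings with the ring edges are the subject of Lemma \ref{L5}, where they are counted as $2^{2r-2}$. By folding a wraparound term $2^{r-2}\bigl(2^{r-3}-1\bigr)$ into $Cr_{D}(IE,RE)$ you are counting crossings that belong to $Cr_{D}(WIE,RE)$; when your tally is later combined with Lemmas \ref{L5}, \ref{L6} and \ref{L7} in the induction of Theorem \ref{T4}, those crossings would be counted twice. Worse, your per-edge figure of $2^{r-3}-1$ ring-edge crossings for a wraparound edge contradicts Lemma \ref{L5}, which requires $2\cdot 2^{r-3}$ such crossings per wraparound edge.

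The second error is that your per-edge count for the genuine inner edges is off by a factor of two. Each ring $R_{k}$ is a closed annular structure in the drawing; an edge $I_{i}$ running from a vertex of one ring to a vertex of a ring $2^{i-2}$ levels deeper must pass through $2^{i-2}$ rings, and each passage through a ring costs \emph{two} crossings (it must enter and leave the band), so an $I_{i}$ contributes $2\cdot 2^{i-2}$ crossings with $RE$, not $2^{i-2}$. The paper's proof is exactly this direct count, with no quarter-diagram reduction and no wraparound term: there are $2^{r}$ edges of each type $I_{i}$, giving
\[
\sum_{i=2}^{r-2} 2\cdot 2^{i-2}\cdot 2^{r}
=\sum_{i=2}^{r-2} 2^{r+i-1}
=2^{r+1}\bigl(2^{r-3}-1\bigr).
\]
Your symmetry reduction to $\frac{1}{4}D$ is legitimate in itself, and with the corrected per-edge count it gives a quarter total of $2^{r-2}\sum_{i=2}^{r-2}2^{i-1}=2^{r-1}\bigl(2^{r-3}-1\bigr)$ from $IE$ alone, which multiplied by four yields the stated value. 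As written, however, halving the $IE$ count and importing the $WIE$ crossings are compensating mistakes, and the lemma remains unproved.
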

\begin{proof}
A cross edge in $WB(r)$ between levels $i$ and $i + 1$ crosses $2^{i - 2}$ number of $B_{k}$'s and therefore the edge $I_{i}$ in $RB(r)$ crosses $2^{i - 2}$ number of $R_{k}$'s. The number of crossings of an $I_{i}$ with a ring is 2 and there are $2^{r}$ number of such edges. Hence the number of crossings of $I_{i}$ with the rings equals $2 \times 2^{i - 2} \times 2^{r}$. This is true for every $i$, $2 \leq i \leq r - 2$. This count adds up to
$\sum_{i=2}^{r-2}2^{r+i-1}=2^{r+1} (2^{r-3}-1)$.
\end{proof}

\begin{lem}\label{L5}
$Cr_{D}(WIE, RE)=2^{2r-2}, r\geq4$.
\end{lem}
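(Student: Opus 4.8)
The plan is to mirror the argument of Lemma \ref{L4}, specialized to the single level of wraparound edges. First I would observe that the edges in $WIE$ are precisely the inner edges $I_{r-1}$, each corresponding to a wraparound edge of $WB(r)$ that joins a vertex of level $r-1$ to a vertex of level $0$; in the bit-labelling these are exactly the edges whose endpoints differ in bit $r-1$. Since the reasoning of Lemma \ref{L4} shows that a cross edge between levels $i$ and $i+1$ traverses $2^{i-2}$ of the blocks $B_k$ (equivalently, the corresponding inner edge $I_i$ crosses $2^{i-2}$ of the nested rings $R_k$), the natural extension to the value $i = r-1$ is that a single wraparound edge crosses $2^{(r-1)-2} = 2^{r-3}$ rings. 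This is precisely the case omitted from the summation $\sum_{i=2}^{r-2}$ in Lemma \ref{L4}, which is why it must be accounted for separately here.

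Next I would count, exactly as before, that each time an inner edge $I_{r-1}$ passes through a ring $R_k$ it must cross that ring's boundary cycle twice, once entering and once leaving the interior face bounded by the ring, so each such ring contributes $2$ crossings. There are $2^r$ wraparound edges in total. Multiplying the three quantities gives
\[
Cr_D(WIE, RE) = 2 \times 2^{r-3} \times 2^r = 2^{2r-2},
\]
which is the claimed value.

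The step I expect to require the most care is justifying the count $2^{r-3}$ for the number of rings a wraparound edge crosses. Unlike the edges $I_i$ with $2 \le i \le r-2$ treated in Lemma \ref{L4}, the wraparound edges close up the nested ring structure by connecting level $r-1$ back to level $0$, so I would want to verify, using the clockwise order of the cycles $C_{k,1}, C_{k,2}, C_{k,4}, C_{k,3}$ and the fact that $C_{1,1}, C_{2,1}, \ldots, C_{2^{r-2},1}$ appear one below the other in $RB(r)$, that the span of these edges across the blocks genuinely continues the doubling pattern $2^{i-2}$ at $i = r-1$, and that no wraparound edge is routed so as to avoid (or additionally pierce) a ring. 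Once this geometric fact is confirmed, the remaining arithmetic is immediate and the conclusion follows.
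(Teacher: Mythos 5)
Your proposal matches the paper's proof essentially step for step: both count $2^{r-3}$ rings crossed per wraparound edge, $2$ crossings per ring traversal, and $2^{r}$ wraparound edges, multiplying to $2\times 2^{r-3}\times 2^{r}=2^{2r-2}$. The only difference is presentational --- the paper simply asserts that each wraparound edge crosses $2^{r-3}$ of the $B_{k}$'s, while you justify that count as the continuation of the doubling pattern of Lemma \ref{L4} to $i=r-1$, which is a fair reading of the same geometric fact.
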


\begin{proof}
Now each wraparound edge in $WB(r)$, $r \geq 4$, crosses $2^{r - 3}$ number of $B_{k}$'s. Hence an edge $I_{r-1}$ crosses $2^{r - 3}$ number of rings. The intersection of a wraparound edge and a ring accounts for 2 crossings and there are $2^{r}$ number of wraparound edges.  Hence the number of crossings of wraparound edges and the rings equals $2\times 2^{r - 3}\times2^{r} = 2^{2r - 2}$.
\begin{figure}[h]
 \centering
  \includegraphics[width=16 cm]{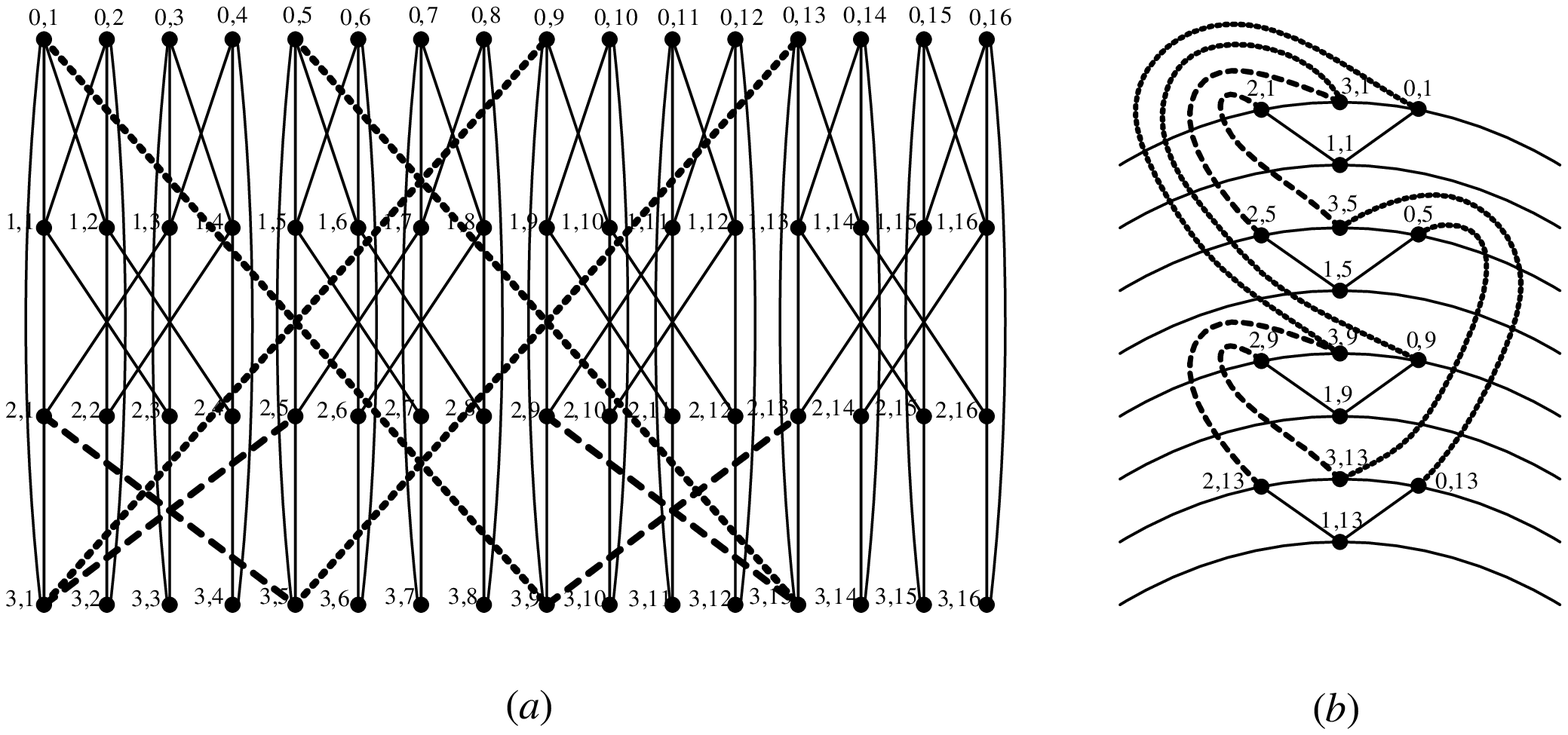}
  \vspace{0 cm}
  \caption{One fourth diagram of $RB(4)$}
  \label{Fig3}
\end{figure}
\end{proof}

\begin{lem}\label{L6}
$Cr_{D}(IE, WIE)=2^{r+1}(2^{r-4}-1), r\geq4$.
\end{lem}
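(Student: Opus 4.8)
The plan is to mirror the counting scheme of Lemmas \ref{L4} and \ref{L5}: fix the level index $i$, determine how many wraparound edges a single inner edge $I_i$ crosses in the drawing $D$, multiply by the number $2^{r}$ of such edges, and finally sum the contributions over $2\le i\le r-2$. Since $IE$ and $WIE$ are disjoint edge sets, counting a crossing from the $I_i$ side introduces no double counting, so the total is $Cr_D(IE,WIE)=\sum_{i=2}^{r-2}c_i\,2^{r}$, where $c_i$ denotes the number of wraparound edges met by one $I_i$ edge.

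First I would make the radial structure precise. In $RB(r)$ the rings $R_1,\dots,R_{2^{r-2}}$ are nested, and each inner edge runs ``radially'' across a consecutive block of rings: by the analysis behind Lemma \ref{L4} an $I_i$ edge spans a block of $2^{i-2}$ rings, while by Lemma \ref{L5} a wraparound edge $I_{r-1}$ spans a block of $2^{r-3}$ rings. Associating to each inner edge the interval of ring indices it covers, two inner edges cross in $D$ if and only if their intervals interleave (overlap without one containing the other) and the two edges are routed on the same side---left or right---of the ring stack, the side being dictated by the distribution rule described before this lemma (all $I_2$ on the left, the $I_i$ split evenly for $i\ge 3$, and likewise for the wraparound edges).

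With this criterion the count $c_i$ is a matter of deciding, for a fixed $I_i$ edge with ring-interval of length $2^{i-2}$, how many of the $2^{r}$ wraparound intervals of length $2^{r-3}$ interleave with it on the same side. I expect to obtain $c_i=2^{i-2}$ for $3\le i\le r-2$ and $c_2=0$: when $i=2$ the interval of $I_2$ has its two endpoints on adjacent rings, so no endpoint of a longer wraparound interval can fall strictly inside it, forcing every wraparound edge either to nest with $I_2$ or to be disjoint from it, and hence to avoid it. Substituting these values gives
\[
Cr_D(IE,WIE)=\sum_{i=3}^{r-2}2^{i-2}\,2^{r}=2^{r-2}\sum_{i=3}^{r-2}2^{i}=2^{r-2}\bigl(2^{r-1}-2^{3}\bigr)=2^{2r-3}-2^{r+1}=2^{r+1}(2^{r-4}-1),
\]
as claimed; the empty sum for $r=4$ correctly yields $0$.

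The hard part will be the exact determination of $c_i$, namely justifying the value $2^{i-2}$ rather than something off by a bounded amount. Two points need care: the same-side restriction, which could in principle halve a naive interleaving count and must be reconciled with the left/right distribution of both families; and the treatment of the ``boundary'' wraparound edges that share a ring with an endpoint of $I_i$ or whose interval merely nests with that of $I_i$---these must be shown not to cross $I_i$, which is exactly what makes the level-$2$ term vanish and pins the leading constant. Checking the closed form against the small cases $r=4,5,6$ (giving $0$, $64$, and $384$ crossings, respectively) provides a useful consistency test for the per-edge count.
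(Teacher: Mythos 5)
Your proposal follows essentially the same route as the paper's proof: the paper also counts level by level that each edge $I_i$ crosses $2^{i-2}$ wraparound edges for $3\le i\le r-2$ (with $I_2$ crossing none), multiplies by the number of such edges, and sums the geometric series --- it merely does this inside the quarter diagram $\frac{1}{4}D$ with $2^{r-2}$ edges per level and then multiplies by $4$, which is arithmetically identical to your whole-drawing count with $2^{r}$ edges per level. The per-edge values $c_i=2^{i-2}$ that you ``expect to obtain'' are exactly the ones the paper asserts from a ``close analysis'' of Figure \ref{Fig3}, so the interleaving-interval criterion you sketch, if carried out, would only supply rigor at the very step the paper itself settles by inspection of the drawing.
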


\begin{proof}
A close analysis of $ \frac{1}{4}D$, where $D$ is a good drawing of $RB(r)$ gives the following information: an edge $I_{3}$ crosses 2 edges in $WIE$; an $I_{4}$ crosses 4 edges in $WIE$ and so on. Thus $I_{r - 2}$ crosses $2^{r - 4}$ edges in $WIE$. There are $2^{r - 2}$ number of edges each of type $I_{i}$, $3 \leq i \leq r - 2$. Thus the number of crossings of the inner edges with the wraparound edges in $ \frac{1}{4}D$ is given by $2^{r-2}(2+2^{2}+\ldots+2^{r-4})$. Thus $Cr_{D} (IE, WIE)=4\cdot2^{r-2} (2+2^{2}+\ldots+2^{r-4})=2^{r+1} (2^{r-4}-1)$.

\end{proof}

\begin{lem}\label{L7}
$Cr_{D}(WIE, WIE)=2^{r}(2^{r-4}-1), r\geq4$.
\end{lem}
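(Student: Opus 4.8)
The plan is to follow the quarter-diagram bookkeeping already used in Lemmas~\ref{L5} and \ref{L6}, now applied to crossings of wraparound edges with one another. By the four-fold symmetry of $RB(r)$ it suffices to count the crossings among the edges of $WIE$ that occur in $\frac{1}{4} D$ and then multiply by $4$; equivalently, one may count for a single wraparound edge how many other wraparound edges it meets and then sum over all $2^{r}$ edges of $WIE$, dividing by $2$ so that each crossing is registered once.

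First I would record the one fact about wraparound edges that is already available: in the proof of Lemma~\ref{L5} it is shown that each wraparound edge $I_{r-1}$ crosses exactly $2^{r-3}$ of the rings $R_{k}$. Since the rings are nested and each wraparound edge is drawn in the inner region, threading from the level-$(r-1)$ vertices back to level $0$, the wraparound edges---like the inner edges $I_{i}$---appear in pairs and are distributed on the left and right of $\frac{1}{4} D$ (this is the observation invoked for the $I_{i}$ in the discussion preceding Lemma~\ref{L4}). This is what lets the self-crossings be read off ring by ring.

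The heart of the argument is to determine, for one wraparound edge, the number of other wraparound edges it actually crosses. As such an edge passes through the $2^{r-3}$ rings it meets, it is forced to cross a companion wraparound edge at each ring it traverses, with the sole exception of the two extremal rings (the innermost and the outermost that it reaches), where the would-be partner is either absent or is an adjacent edge sharing an endpoint and hence not crossed in a good drawing. This yields $2^{r-3}-2$ genuine crossings per wraparound edge. I expect this ``$-2$'' correction to be the main obstacle: one must inspect $\frac{1}{4} D$ carefully to see precisely which encounters among the nested rings produce honest transversal crossings, and to confirm that no two wraparound edges cross more than once and that adjacent wraparound edges do not cross, so that the good-drawing hypothesis is respected.

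With the per-edge count in hand the assembly is routine. Summing $2^{r-3}-2$ over all $2^{r}$ wraparound edges and halving gives $\frac{1}{2}\,2^{r}\,(2^{r-3}-2)=2^{r-1}(2^{r-3}-2)=2^{2r-4}-2^{r}=2^{r}(2^{r-4}-1)$, as claimed; the same total is recovered by summing the contributions in $\frac{1}{4} D$ (namely $2^{r-3}(2^{r-3}-2)$) and multiplying by $4$.
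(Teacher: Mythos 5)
Your global bookkeeping is sound: under the good-drawing hypothesis, summing over each wraparound edge the number of companions it crosses and then halving is a valid handshake count, the four-fold symmetry reduction is the same one the paper uses, and the arithmetic from the per-edge figure $2^{r-3}-2$ to the total $2^{r}(2^{r-4}-1)$ is correct. The per-edge figure itself also agrees with what the paper's drawing actually yields.

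The gap is in how you arrive at that per-edge figure, which is the entire content of the lemma. You assert that a wraparound edge crosses exactly one companion at each of the $2^{r-3}$ rings it traverses, except at the two extremal rings, and you explicitly defer the verification of this (``the main obstacle \ldots one must inspect $\frac{1}{4}D$ carefully''). That deferral leaves the lemma unproved; worse, the mechanism you propose is not what happens in the drawing $D$. In $\frac{1}{4}D$ the wraparound edges cross pair against pair, in blocks of four, and only between pairs attached to cycles of the same parity: the pair incident at $C_{1,1}$ crosses the pairs at $C_{3,1}, C_{5,1}, \ldots, C_{2^{r-3}-1,1}$ and none of the pairs at even-indexed cycles, and symmetrically for $C_{2,1}, C_{4,1}, \ldots$. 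So along its course an edge picks up two crossings at every second station and none elsewhere---not one crossing per ring---and it is only the totals that coincide. The paper's proof exploits exactly this structure: it sums the pair-against-pair intersections as the arithmetic series $4(2^{r-4}-1)+4(2^{r-4}-2)+\cdots+4 = 2^{r-3}(2^{r-4}-1)$, doubles this for the even-indexed family to get the quarter count $2\cdot 2^{r-3}(2^{r-4}-1)$, and multiplies by four. To repair your write-up you would need to replace the per-ring heuristic with this (or an equivalently verified) description of which wraparound edges actually cross which.
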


\begin{proof}
The pair of wraparound edges incident vertices of $C_{1, 1}$ crosses wraparound edges incident at vertices of $C_{3, 1 },C_{5, 1} \ldots C_{2^{r - 3}- 1, 1}$. Each pair intersection causes 4 crossings and there are $2^{r - 4 }-1$ number of cycles. This accounts for $4(2^{r - 4 }- 1)$ crossings. Similarly the pair of wraparound edges incident vertices of  $C_{3, 1 }$ crosses wraparound edges incident at vertices of $C_{5, 1 }, C_{7, 1} \ldots C_{2^{r - 3}- 1, 1}$. This gives a count of $4(2^{r - 4} - 2)$. Proceeding in this manner the count adds up to
$4(2^{r - 4} - 1) + 4(2^{r - 4} - 2) + \ldots +4(2^{r - 4 }- (2^{r - 4} - 1)) = 2^{r - 3} (2^{r - 4} - 1)$
Similarly the edges incident at cycles $C_{2, 1}, C_{4, 1} \ldots C_{2^{r - 3} - 2, 1}$ give the same count. Thus the number of crossings of the wraparound edges with themselves in $\frac{1}{4}D$ is $2\cdot 2^{r - 3} (2^{r - 4} - 1)$. So $Cr_{D} (WIE,WIE)=2^{r} (2^{r-4}-1)$.
\end{proof}

\begin{rem}\label{R1}
Lemmas \ref{L6} and \ref{L7} confirm that wraparound edges of $RB(r)$ do not cross themselves and the inner edges when $r = 4$. But they cross when $r > 4$.
\end{rem}

\begin{rem}\label{R2}
As in ordinary butterfly networks one cannot say that $r$-dimensional wrapped butterfly network $RB(r)$ contains two copies of $(r - 1)$-dimensional networks. A close examination of the diagram of $RB(r)$ implies that $Cr(RB(r))$ includes $2Cr(RB(r - 1))$ along with additional counts arising from the crossings, in $RB(r)$, of the wraparound edges with the rings, inner edges and wraparound edges.
\end{rem}
	If $D$ is good drawing of $RB(4)$ it follows from Figure \ref{Fig3} that $\frac{1}{4} D$ has 24 crossings. Thus we have the following result.

\begin{lem}\label{L9}
$Cr_{D}(RB(4))=96$.
\end{lem}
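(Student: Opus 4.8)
The plan is to obtain $Cr_{D}(RB(4))$ by collecting the crossing counts already established for a general good drawing $D$ of $RB(r)$ and specialising them to $r=4$. Writing the edge set of $RB(r)$ as the disjoint union $RE\cup IE\cup WIE$, every crossing of $D$ lies in exactly one of the six unordered pair-classes $(RE,RE)$, $(IE,IE)$, $(WIE,WIE)$, $(IE,RE)$, $(WIE,RE)$, $(IE,WIE)$, so that
\begin{align*}
Cr_{D}(RB(4)) = {} & Cr_{D}(RE,RE) + Cr_{D}(IE,IE) + Cr_{D}(WIE,WIE) \\
& {} + Cr_{D}(IE,RE) + Cr_{D}(WIE,RE) + Cr_{D}(IE,WIE).
\end{align*}
Lemmas \ref{L4}--\ref{L7} evaluate four of these six terms at $r=4$, giving $Cr_{D}(IE,RE)=2^{5}(2^{1}-1)=32$, $Cr_{D}(WIE,RE)=2^{6}=64$, $Cr_{D}(IE,WIE)=2^{5}(2^{0}-1)=0$ and $Cr_{D}(WIE,WIE)=2^{4}(2^{0}-1)=0$. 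Once the two remaining terms are shown to vanish, summing will yield $32+64+0+0=96$.

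The first step is to dispose of $Cr_{D}(RE,RE)$ and $Cr_{D}(IE,IE)$. The rings $R_{1},\ldots,R_{2^{r-2}}$ are planar by Lemma \ref{L3} and, by construction, are nested one strictly inside the next; distinct nested simple closed curves do not cross, so $Cr_{D}(RE,RE)=0$ for every $r$. For the term $Cr_{D}(IE,IE)$ I would specialise to $r=4$, where $IE$ consists solely of edges of type $I_{2}$: in each quarter $\frac{1}{4}D$ the $I_{2}$ edges link the consecutive ring pairs $(R_{1},R_{2})$ and $(R_{3},R_{4})$, which occupy disjoint radial bands of the annulus, while the four quarters occupy disjoint angular sectors; hence no two $I_{2}$ edges meet and $Cr_{D}(IE,IE)=0$.

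With both extra terms equal to zero the six-term sum collapses to the four lemma values and gives $Cr_{D}(RB(4))=96$. As an independent check, the same total drops out of the fourfold symmetry of the construction: each quarter $\frac{1}{4}D$ carries $\frac{1}{4}Cr_{D}(IE,RE)+\frac{1}{4}Cr_{D}(WIE,RE)=8+16=24$ crossings, in agreement with Figure \ref{Fig3}, and since every crossing lies in exactly one of the four disjoint angular sectors we again obtain $Cr_{D}(RB(4))=4\times 24=96$.

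The step I expect to carry the genuine content, rather than the arithmetic, is the verification of exhaustiveness: that the six pair-classes really account for all crossings and, in particular, that no $I_{2}$ edge crosses another $I_{2}$ edge at $r=4$. This is precisely where one must read the layout of Figure \ref{Fig3} with care, confirming that the edges drawn on the left between $(R_{1},R_{2})$ and between $(R_{3},R_{4})$ remain in separate regions and that the wraparound edges of one quarter never stray into a neighbouring sector. Everything else is a direct substitution of $r=4$ into Lemmas \ref{L4}--\ref{L7}.
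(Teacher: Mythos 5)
Your proof is correct, but it runs in the opposite direction from the paper's. The paper's entire argument for Lemma \ref{L9} is the one you relegate to your ``independent check'': it reads directly off Figure \ref{Fig3} that the quarter diagram $\frac{1}{4}D$ of $RB(4)$ contains $24$ crossings, and concludes $Cr_{D}(RB(4)) = 4 \times 24 = 96$ by the fourfold symmetry of the construction --- nothing more. You instead assemble the total from the general machinery: partitioning the crossings into the six pair-classes on $RE \cup IE \cup WIE$, substituting $r=4$ into Lemmas \ref{L4}--\ref{L7} to get $32 + 64 + 0 + 0$, and then arguing separately that $Cr_{D}(RE,RE)$ and $Cr_{D}(IE,IE)$ vanish (the first because the rings are planar and nested, which the paper asserts when it says only inner and wraparound edges contribute crossings; the second because at $r=4$ the set $IE$ consists only of $I_{2}$ edges joining consecutive ring pairs in disjoint regions). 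What your route buys is independence from figure inspection and an explicit verification that the lemmas are mutually consistent and exhaustive --- in particular you are the one who notices that the paper has no lemma covering $IE$--$IE$ crossings and that this class must be argued to be empty, a genuine gap in the paper's bookkeeping that its figure-count silently absorbs. What the paper's route buys is brevity, and also a subtle robustness: by counting crossings in the figure directly, it does not need the pair-class decomposition to be exhaustive as a formal claim. Both arguments are sound and arrive at $96$; yours is the one a referee could check without the figure.
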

Twice the count $Cr(RB(4))$ together with Lemmas \ref{L5}, \ref{L6} and \ref{L7} for $r =5$ gives the following result.

\begin{lem}\label{L10}
$Cr_{D}(RB(5))=544$.
\end{lem}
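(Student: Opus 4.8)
The plan is to treat Lemma \ref{L10} as a direct application of the recursive decomposition recorded in Remark \ref{R2}, followed by a short arithmetic verification. Remark \ref{R2} asserts that, for a good drawing $D$ of $RB(r)$, the crossing count splits as $2\,Cr_{D}(RB(r-1))$ plus the crossings created in $RB(r)$ by the wraparound edges against the rings, against the inner edges, and against themselves. Taking $r=5$, I would record this as
\[
Cr_{D}(RB(5)) = 2\,Cr_{D}(RB(4)) + Cr_{D}(WIE,RE) + Cr_{D}(IE,WIE) + Cr_{D}(WIE,WIE),
\]
where each of the three wraparound terms is evaluated for the drawing of $RB(5)$, i.e.\ at $r=5$.

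The second step is purely computational. Lemma \ref{L9} supplies $Cr_{D}(RB(4))=96$, contributing $2\cdot 96 = 192$. Substituting $r=5$ into Lemmas \ref{L5}, \ref{L6} and \ref{L7} gives $Cr_{D}(WIE,RE)=2^{2\cdot 5-2}=256$, then $Cr_{D}(IE,WIE)=2^{6}(2^{1}-1)=64$, and $Cr_{D}(WIE,WIE)=2^{5}(2^{1}-1)=32$. Adding these contributions yields $192+256+64+32=544$, the asserted value; as a sanity check this agrees with the eventual bound $\frac{7}{8}4^{r}-(3r-4)2^{r}$ evaluated at $r=5$.

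The substantive point, and the one I would argue with care, is the legitimacy of the displayed splitting rather than the arithmetic. Remark \ref{R2} explicitly warns that $RB(5)$ does \emph{not} contain two disjoint copies of $RB(4)$ as a subgraph, since the cycles $C_{k,j}$ grow with $r$ and hence the rings of $RB(5)$ differ from those of $RB(4)$; consequently the identity above must be read as an equality of crossing counts established by inspecting the full drawing, not as a subgraph containment. The main obstacle is therefore the bookkeeping: from the structure of $\frac{1}{4}D$ together with the reflections completing $D$, I must verify that every crossing of $RB(5)$ is counted exactly once on the right, that the crossings avoiding all wraparound edges total precisely $2\,Cr_{D}(RB(4))$, and that each wraparound-incident crossing falls into exactly one of the three mutually exclusive classes $Cr_{D}(WIE,RE)$, $Cr_{D}(IE,WIE)$, $Cr_{D}(WIE,WIE)$. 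In particular I must confirm that at $r=5$ no additional inner–inner term $Cr_{D}(IE,IE)$ survives outside the $2\,Cr_{D}(RB(4))$ contribution. Once the completeness and disjointness of this partition of crossings are in place, the evaluation of the previous paragraph finishes the proof.
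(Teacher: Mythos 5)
Your proposal matches the paper's own argument: the paper proves Lemma \ref{L10} precisely by taking twice the count $Cr_{D}(RB(4))=96$ from Lemma \ref{L9} and adding the values of Lemmas \ref{L5}, \ref{L6} and \ref{L7} evaluated at $r=5$ (via the decomposition of Remark \ref{R2}), giving $192+256+64+32=544$. Your version is, if anything, slightly more careful than the paper, since you spell out the arithmetic and explicitly flag the completeness and disjointness of the crossing partition that the paper leaves implicit.
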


\begin{figure}
\centering
  \includegraphics[width=11 cm]{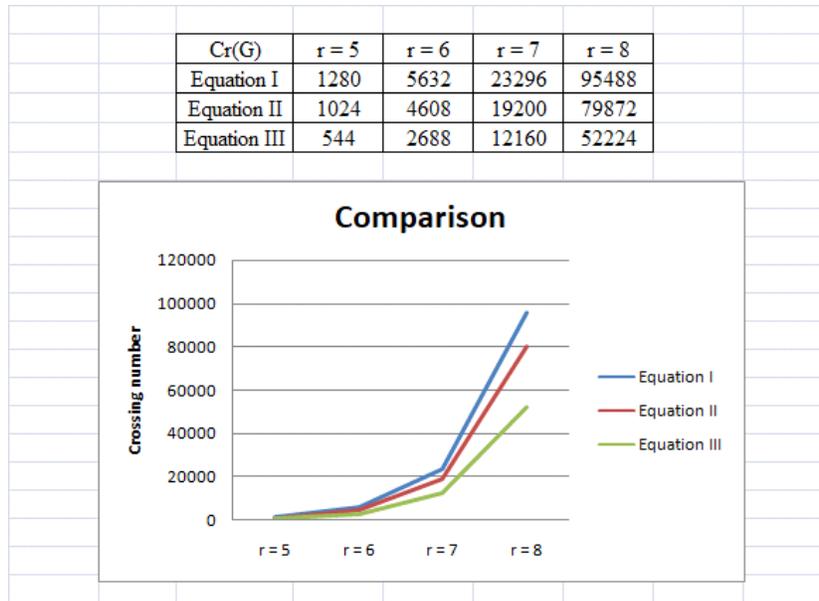}
  \vspace{0 cm}
  \caption{Comparison of the three bounds}
  \label{Fig4}
\end{figure}

\begin{thm}\label{T4}
Let $G$ be $RB(r)$. Then for $r \geq 6$,  $Cr(G)\leq \frac{7}{8}4^{r}-(3r-4)2^{r}$.
\end{thm}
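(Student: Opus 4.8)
The plan is to prove the bound by induction on $r$, using the recursive decomposition of $RB(r)$ recorded in Remark~\ref{R2} together with the three wraparound crossing counts established in Lemmas~\ref{L5}, \ref{L6}, and \ref{L7}.

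First I would make Remark~\ref{R2} precise as a recurrence. Since $RB(r)$ contains two nested copies of $RB(r-1)$ and the only crossings not already present in those copies are the ones created by the new wraparound edges, we have
$$Cr_D(RB(r)) = 2\,Cr_D(RB(r-1)) + Cr_D(WIE, RE) + Cr_D(IE, WIE) + Cr_D(WIE, WIE)$$
for every $r \geq 5$. Substituting the closed forms $Cr_D(WIE, RE) = 2^{2r-2}$, $Cr_D(IE, WIE) = 2^{r+1}(2^{r-4}-1)$, and $Cr_D(WIE, WIE) = 2^{r}(2^{r-4}-1)$ from Lemmas~\ref{L5}--\ref{L7} and collecting the $4^{r}$ and $2^{r}$ terms, the three contributions sum to $\frac{7}{16}4^{r} - 3\cdot 2^{r}$, so the recurrence reduces to
$$Cr_D(RB(r)) = 2\,Cr_D(RB(r-1)) + \tfrac{7}{16}4^{r} - 3\cdot 2^{r}.$$

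Next I would solve this recurrence by induction, using the base value $Cr_D(RB(5)) = 544$ from Lemma~\ref{L10}. Writing $f(r) = \frac{7}{8}4^{r} - (3r-4)2^{r}$, a direct substitution confirms that $f$ satisfies the recurrence, namely $2f(r-1) + \frac{7}{16}4^{r} - 3\cdot 2^{r} = f(r)$, and one checks $f(5) = 544$ to anchor the induction. Hence $Cr_D(RB(r)) = f(r)$ for all $r \geq 5$, and in particular for $r \geq 6$. Since $D$ is a good drawing, $Cr(RB(r)) \leq Cr_D(RB(r)) = f(r)$, which is the asserted bound.

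The step I expect to be the main obstacle is the justification of the recurrence itself---verifying that the two embedded copies of $RB(r-1)$ account for every crossing except those involving a wraparound edge, with no crossing either omitted or double-counted. This is precisely the ``close examination of the diagram'' invoked in Remark~\ref{R2}, and it requires tracking how the nesting of the rings $R_k$ interacts with the left/right placement of the inner edges specified in the construction of $\frac{1}{4}D$. Once the recurrence is secured, the remaining algebra is entirely routine.
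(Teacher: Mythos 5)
Your proposal is correct and takes essentially the same route as the paper: induction on $r$ anchored at $Cr_{D}(RB(5))=544$ (Lemma \ref{L10}), using the decomposition of Remark \ref{R2} together with the wraparound counts of Lemmas \ref{L5}, \ref{L6} and \ref{L7}, and the same closing algebra. If anything, your version is logically tidier, since you keep the recurrence at the level of the fixed drawing $D$ and only invoke $Cr(G)\leq Cr_{D}(G)$ at the very end, whereas the paper states the recurrence directly for $Cr$; both arguments rest equally on the unproved ``close examination'' claim in Remark \ref{R2}.
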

\begin{proof}
The proof is by induction on $r$. The result is true for $r = 5$. Assume the result for $r = k - 1$.
Then $Cr(RB(k-1))\leq \frac{7}{8} 4^{k-1}-(3(k-1)-4) 2^{k-1}=\frac{7}{8} 4^{k-1}-(3k-7)2^{k-1}$. By an earlier remark, $Cr(RB(k))\leq 2 \{ \frac{7}{8} 4^{k-1}-(3k-7) 2^{k-1}\}+2^{2k-2}+2^{k+1}(2^{k-4}-1)+2^{k}(2^{k-4}-1)$\\
$=\frac{7}{8} 4^{k}-(3k-4)2^{k}$
\end{proof}

The bound mentioned in Theorem \ref{T3} was obtained taking into consideration the crossings of only the inner edges and wraparound edges with the rings. We claim that our new count is correct.

Denoting the bounds in Theorems \ref{T1}, \ref{T2} and \ref{T4} as equations I, II and III respectively the comparison diagram is given in Figure \ref{Fig4}.

\end{document}